\documentclass[mathpazo]{cicp}

\usepackage{algorithm}
\usepackage{algorithmic}
\usepackage{bm}
\usepackage{xcolor}
\usepackage{float}
\usepackage{graphicx}
\usepackage{epstopdf}

\begin{document}
\title{Hermite spectral collocation methods for fractional PDEs in unbounded domains}


 \author[Tang et.~al.]{Tao Tang\affil{1}\comma\corrauth,
       Huifang Yuan\affil{2}, and Tao Zhou\affil{3}}
 \address{\affilnum{1}\ Department of Mathematics, Southern University of Sciences and Technology, Shenzhen, China. \\
           \affilnum{2}\ Department of Mathematics, Hong Kong Baptist University, Hong Kong, China.
           \affilnum{3}LSEC, Institute of Computational Mathematics, Academy of Mathematics and Systems Science,
Chinese Academy of Sciences, Beijing, China.}
 \emails{{\tt tangt@sustc.edu.cn} (T.~Tang), {\tt 13480510@life.hkbu.edu.hk} (H.~Yuan),
          {\tt tzhou@lsec.cc.ac.cn} (T.~Zhou)}

\begin{abstract}
This work is concerned with spectral collocation methods for fractional PDEs in unbounded domains. The method consists of expanding the solution with proper global basis functions and imposing collocation conditions on the Gauss-Hermite points. In this work, two Hermite-type functions are employed to serve as basis functions. Our main task is to find corresponding differentiation matrices which are computed recursively. Two important issues relevant to condition numbers and scaling factors will be discussed. Applications of the spectral collocation methods to multi-term fractional PDEs are also presented. Several numerical examples are carried out to demonstrate the effectiveness of the proposed methods.
\end{abstract}

\ams{33C45,34K37,35R11,65M70}
\keywords{Fractional PDEs,  Hermite polynomials/functions, unbounded domain, spectral collocation methods}

\maketitle


\section{Introduction}
\label{sec1}
Many systems in science and engineering can be more accurately described by  using fractional partial differential equations (PDEs) rather than the traditional approaches \cite{physics_II,BFW98,dPQRV11}. This leads to an intensive investigation over the past two decades on efficient numerical methods for fractional PDEs. Among others, the finite difference method and the finite element method are two widely investigated methods in this direction, see, e.g., \cite{differenceHuang,differenceJi,elementJin,Jiao16,Ren15,TD15,WZ16} and references therein.

Another powerful approach for fractional models is the spectral methods. In this approach, the key is to construct suitable basis functions to handle to the solution singularities. Along this direction, a recent advance is brought by Karniadakis and co-authors who proposed the so-called Jacobi \textit{poly-fractonomials} based spectral methods \cite{spectralgeorge}. These bases are eigenfunctions of the corresponding fractional and tempered fractional Sturm-Liouville problems. Another approach that employs the generalized Jacobi functions (GJFs) is proposed by Shen et al. \cite{spectralshen}. Those bases are adapted to the fractional operator, as a fractional derivative of poly-fractonomials/GJFs is simply another poly-fractonomials/GJFs with different parameter. Consequently, fractional derivatives become a local operator in the physical space spanned by poly-fractonomials/GJFs, and this property leads to very efficient spectral methods for fractional PDEs in bounded domains. The poly-fractonomials/GJFs have been successfully applied to various fractional models \cite{Chen_Mao_Shen,KZK16,LZK16,SS}. However, compared to fractional PDEs in bounded domain, little works have been done for fractional PDEs defined on unbounded domains. Very recently, a spectral method for fraction differential equations in the half line is proposed in \cite{spectral Arab,LZK16} -- using the generalized Largurre functions as bases -- extending the idea of \cite{spectralgeorge}.

 When this paper is prepared, we noticed very recent work of Mao and Shen \cite{Mao_Shen} who proposed both the spectral Galerkin and collocation method for fractional PDEs in unbounded domains. However, the collocation method therein relies on an equivalent formulation in frequency space by the Fourier transform, and performs collocation methods to the equivalent formulation that involve forward/backward Hermite transform. In contrast, our collocation methods are \textit{direct} methods that based on the derivation of explicit DMs. Moreover, our approach can be easily applied to nonlinear problems as the differentiation matrices are constructed explicitly.

In this work, we aim at designing spectral collocation methods for fractional PDEs in unbounded domains (the whole space $\mathbb{R}^d$ ).
To better demonstrate our idea, we consider the following model equation:
\begin{equation}\label{Laplace}
\begin{cases}
  \left(-\Delta\right)^{\alpha/2}u(\mathbf{x}) + \gamma f(u) = g(\mathbf{x}), \quad &\mathbf{x}\in\mathbb{R}^d,\\
  u(\mathbf{x})=0,\quad &|\mathbf{x}| \to \infty,
  \end{cases}
\end{equation}
where $f(u)$ is a linear/nonlinear function of $u$, and the fractional Laplace operator is defined as \cite{Lan72}
\begin{equation}\label{sigular representation}
(-\Delta)^{\alpha/2}u(\mathbf{x})=C_{n,\alpha}\int_{\mathbb{R}^d}\dfrac{u(\mathbf{x})-u(\mathbf{y})}{|\mathbf{x-y}|^{n+\alpha}}d\mathbf{y}, \quad \textmd{with} \quad
C_{n,\alpha}=\dfrac{{\alpha}2^{\alpha-1}\Gamma\left(\dfrac{\alpha+n}{2}\right)}{\pi^{n/2}\Gamma\left(\dfrac{2-\alpha}{2}\right)}.
\end{equation}
Notice that the fractional Laplace operator $(-\Delta)^{\alpha/2}$, where $0<\alpha<2$, recovers the standard Laplace operator as $\alpha\to 2$.

For such problems that are defined on the whole space $\mathbb{R}^n$, there are alternative (equivalent) ways to define the fractional Laplace operator. For example, it can be defined as a pseudo-differential operator via the Fourier transform:
\begin{equation}\label{via fourier transform}
  \mathcal{F}{\left[(-\Delta)^{\alpha/2}u\right]}\left(\xi\right)=|\xi|^{\alpha}\mathcal{F}{\left[u\right]}\left(\xi\right).
\end{equation}

Our spectral collocation methods handling the above equation consist of two parts: expanding the solution with the basis functions and imposing collocation conditions on the Guass-Hermite points. In particular, we shall consider two types of expansion bases, that is, the normalized Hermite functions $\{e^{-x^2/2}H_n(x)\}_n$ and the over-scaled bases $\{e^{-x^2}H_n(x)\}_n.$ For both approaches, we shall derive explicit formulas for the associated differential matrix (DM), for which the components can be computed efficiently by using a recurrence formula. To deal with solutions with different decay rate, a scaling factor will be included in the expansion. Application to multi-term fractional Laplace equations will also be discussed. It is noticed that both methods admit spectral convergence for solutions with exponential decay in infinity.

The rest of the this paper is organized as follows. The next section provides some preliminaries for some special functions. Our spectral collocations methods are presented and discussed in Sections 3 and 4, for the over-scaled bases $\{e^{-x^2}H_n(x)\}_n$  and the normalized Hermite functions, respectively. In Section 5 we shall discuss an equivalent collocation scheme by using the Lagrange type bases. Numerical examples are presented in Section 6 to demonstrate the effectiveness of the proposed spectral methods. We finally give some concluding remarks in Section 7.

\section{Preliminaries}
\label{sec2}

This section will provide some preliminaries useful for designing our spectral collocation methods. For ease of notations, in this section we shall focus our attention to the one-dimensional case.

\subsection{Confluent hypergeometric functions}
We first introduce the definition of confluent hypergeometric function of the first kind which is defined by the following power series \cite{tableofintegrals}
\begin{equation}\label{expansion definition}
{}_1F_1\left(a,b;x\right)=\sum_{k=0}^{\infty}\dfrac{\left(a\right)_{k}}{\left(b\right)_{k}}\dfrac{x^k}{k!},
\end{equation}
where $\left(a\right)_{k}$ is the Pochhammer symbol defined as
\begin{equation*}
\left(a\right)_{0}=1, \quad \left(a\right)_{k}=a\left(a+1\right)\left(a+2\right)...\left(a+k-1\right).
\end{equation*}
By the above definitions, we can easily get
\begin{equation*}
\dfrac{d^{k}}{dx^{k}}{}_1F_1\left(a,b;x\right)=\dfrac{\left(a\right)_{k}}{\left(b\right)_{k}}{}_1F_{1}\left(a+k,b+k;x\right).
\end{equation*}
We also have the following integral representation \cite{recurrenceof1F1}
\begin{equation}\label{integral definition}
{}_1F_1\left(a,b;x\right)=\dfrac{\Gamma\left(b\right)}{\Gamma\left(b-a\right)\Gamma\left(a\right)}
\int_{0}^{1}e^{tx}t^{a-1}\left(1-t\right)^{b-a-1}dt, \quad a, \, b >0.
\end{equation}
It is noticed that the confluent hypergeometric function satisfies the Kummer's transformation formula
\begin{equation}\label{Kummer property}
{}_1F_1\left(a,b;-x\right)=e^{-x}{}_1F_{1}\left(b-a,b;x\right).
\end{equation}
It it easy to check that the following recurrence formula holds
\begin{equation}\label{recurrence relation}
\left(2a-b+x\right){}_1F_1\left(a,b;x\right)=a{}_1F_1\left(a+1,b;x\right)-\left(b-a\right){}_1F_1\left(a-1,b;x\right).
\end{equation}

\subsection{Hermite polynomials/functions}
The Hermite polynomials, denoted by $H_{n}\left(x\right)$, $n \geqslant 0$, $x\in\mathbb{R}$, are defined by the following three-term recurrence relation (see e.g., \cite{SWT,Wang16}):
\begin{align*}
H_{0}(x)=1, \quad H_{1}(x)=2x, \quad
H_{n+1}(x)=2xH_{n}(x)-2nH_{n-1}(x), \quad n\geq 1.
\end{align*}
The Hermite polynomials are orthogonal with respect to the weight function $\omega\left(x\right)=e^{-x^2},$ namely,
\begin{equation*}
\int_{\mathbb{R}}H_{m}(x)H_{n}(x)e^{-x^2}dx=\gamma_{n}\delta_{mn},  \quad \gamma_{n}=\sqrt{\pi}2^{n}n!.
\end{equation*}
It is well known that Hermite polynomials and the confluent hypergeometric function satisfy the following formulas:
\begin{align}\label{hermite and confluent}
  &H_{2n}(x)=(-1)^n\dfrac{(2n)!}{n!}{}_1F_1\left(-n, 1/2;x^2\right);\\
  &H_{2n+1}(x)=(-1)^n\dfrac{(2n+1)!}{n!}2x{}_1F_1\left(-n, 3/2;x^2\right).
\end{align}
The corresponding normalized Hermite functions are defined as
\begin{equation}\label{normalized Hermite function}
\widehat{H}_{n}(x)=\dfrac{1}{\sqrt{2^{n}n!}}e^{-x^2/2}H_{n}(x).
\end{equation}
and they are orthogonal with respect to the weight function $\omega(x)=1$, i.e.,
\begin{equation*}
\int_{\mathbb{R}}\widehat{H}_{m}(x)\widehat{H}_{n}(x)dx=\sqrt{\pi}\delta_{mn}.
\end{equation*}
We shall discuss the spectral collocation methods based on the above Hermite functions in Section 4.

We shall also discuss the spectral collocation methods based on the \textit{over-scaled} bases that defined as follows
\begin{equation}\label{associated Hermite function}
\widetilde{H}_{n}(x)= e^{-x^2/2}\widehat{H}_{n}(x).
\end{equation}
It is easy to see that the generalized Hermite defined here is orthogonal with respect to the weight function $\omega(x)=e^{x^2}$, i.e.,
\begin{equation*}
\int_{\mathbb{R}}\widetilde{H}_{m}(x)\widetilde{H}_{n}(x)e^{x^2}dx=\sqrt{\pi}\delta_{mn}.
\end{equation*}
Such bases were first proposed by Brinkman in \cite{Brinkman} and have been well studied
in physics, see e.g., \cite{Risken}.
The above over-scaled basis was first proposed by Brinkman when studying the so-called Fokker-Planck equations,
where the velocity part of the probability distribution function was expanded in Hermite functions
(\ref{associated Hermite function}).
His approach has become one of the most popular methods used for solving the Fokker-Planck equation, see, e.g., \cite{Fok,Risken}.

\subsection{Bessel functions}
We shall also use properties of generalized Bessel functions.  Recall that the Bessel function of order $\mu$ is defined as
\begin{equation}
J_{\mu}(x)=\sum_{m=0}^\infty\, \frac{(-1)^m}{m! \,\Gamma(m+\mu+1)} \left(\frac{x}{2}\right)^{2m+\mu}.
\end{equation}
In particular we have
\begin{equation}\label{bassel_half}
J_{-\frac{1}{2}}(x)=\sqrt{\frac{2}{\pi x}}\cos{x}.
\end{equation}
For the Bessel functions it holds in \cite{tableofintegrals} that
\begin{equation}\label{bassel_half_II}
\int_{\mathbb{R}^+}J_{\mu}(bt)\exp(-p^2 t^2)t^{\nu-1}dt = \frac{ \left(\frac{b}{2p}\right)^{\mu} \Gamma(\frac{\mu+\nu}{2})}{2p^{\nu}\Gamma(\mu+1)}  {}_1F_1\left(\frac{\mu+\nu}{2}, \mu+1, -\frac{b^2}{4p^2}\right).
\end{equation}
It is easy to check that for Bessel function with integer parameter it holds
\begin{equation}\label{bassel_p1}
\frac{d}{dx}\left\{x^{-n}J_{n}(x)\right\}=-x^{-n}J_{n+1}(x).
\end{equation}
Let $m, n$ be two integers, and let $\nu \geq -n-1, \,\, \mu \geq -m-1$ be two real numbers,  we denote
\begin{equation}
\mu + \nu + m + n := \delta; \quad  \mu + \nu - m - n := \zeta.
\end{equation}
Then for $a> 0$ it holds (see e.g, \cite{integrals and series}, P.216)
\begin{align}\label{bassel_p2}
&\quad \int_{0}^{a}x^{v+2n+1}\left(a^2-x^2\right)^{m+\mu/2}J_{\mu}\left(b\sqrt{a^2-x^2}\right)J_{v}(cx)dx \nonumber\\
&=a^{\zeta+1}b^{\mu}c^{v}\left(\tfrac{\partial}{b\partial b}\right)^{m}\left(\tfrac{\partial}{c\partial c}\right)^{n}\left[\left(b^2+c^2\right)^{-\tfrac{\delta+1}{2}}J_{\delta+1}\left(a\sqrt{b^2+c^2}\right)\right].
\end{align}

\section{Spectral collocation methods based on $\{\widetilde{H}_{n}(x)\}_n$}
\label{sec3}
In this section, we first consider the spectral collocation method based on the $\{\widetilde{H}_{n}(x)\}_n.$
We assume that the solution admits an exponential decay in infinity, and we shall approximate $u(x)$ with a finite sum of the the basis $\{\widetilde{H}_{n}(x)\}_n,$ i.e.,
\begin{equation}\label{expansion}
  u(x)\approx u_{N}(x)=\sum_{n=0}^{N-1}c_{n}\widetilde{H}_{n}(x).
\end{equation}
By inserting the above expansion into the fractional PDE (\ref{Laplace}), we obtain
\begin{equation}\label{fractional expansion}
  \left(-\Delta\right)^{\alpha/2}u_{N}(x) + \gamma f(u_N(x)) = g(x).
\end{equation}
Let $\{x_{i}\}_{i=0}^{N-1}$ be the roots of the $N$-th order Hermite polynomials, we then impose the collocation conditions on these collocation points, which yields
\begin{align*}
\sum_{n=0}^{N-1}c_{n}(-\Delta)^{\alpha/2}\widetilde{H}_{n}(x_{i}) + \gamma f(u_N(x_i)) =g(x_{i}), \quad  i=0, 1,..., N-1.
\end{align*}
Then we can write the above equations into the following system
\begin{equation*}
  \widetilde{\mathcal{D}}^{\alpha} \mathbf{c} + \gamma F(\mathbf{c})=\mathbf{g},
\end{equation*}
where $\mathbf{c}=\left(c_0, ..., c_{N-1}\right)^{T}$ is the unknown coefficient vector, and $\widetilde{\mathcal{D}}^{\alpha} \in \mathbb{R}^{N\times N}$ is the differential matrix with components
\begin{equation*}
  \widetilde{\mathcal{D}}_{i,j}^{\alpha}=(-\Delta)^{\alpha/2}\widetilde{H}_j(x_{i}), \quad i , j = 0, 1,..., N-1.
\end{equation*}
By solving the above linear system one gets an approximated solution $u_{N}(x)$. Next, we shall derive explicit formulas for the components of the differential matrix.

\subsection{The one dimensional case}
We first consider the even terms $\widetilde{H}_{2n}.$ Precisely, we have the following theorem
\begin{theorem}
For $0< \alpha <2 ,$ we have
\begin{equation*}
  (-\Delta)^{\alpha/2}\widetilde{H}_{2n}\left(x\right)=2^{\alpha}\dfrac{(-1)^{n}\sqrt{(2n)!}}{2^{n}n!}\dfrac{\Gamma\left(n+\frac{\alpha}{2}+\frac{1}{2}\right)}{\Gamma\left(n+\frac{1}{2}\right)}{}_1F_{1}\left(n+\dfrac{\alpha}{2}+\dfrac{1}{2},\dfrac{1}{2},-x^2\right).
\end{equation*}
\end{theorem}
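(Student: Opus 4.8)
The plan is to pass to Fourier space, where the fractional Laplacian acts by multiplication by $|\xi|^{\alpha}$ according to (\ref{via fourier transform}). First I would strip off the normalizing constant: combining (\ref{normalized Hermite function}) and (\ref{associated Hermite function}) gives $\widetilde{H}_{2n}(x)=\bigl(2^{2n}(2n)!\bigr)^{-1/2}e^{-x^2}H_{2n}(x)$, so it suffices to compute $(-\Delta)^{\alpha/2}\bigl(e^{-x^2}H_{2n}\bigr)$ and reinstate the prefactor at the end. The Fourier transform of the over-scaled Hermite is elementary via the generating function $\sum_k H_k(x)t^k/k!=e^{2xt-t^2}$: completing the square in $\int_{\mathbb{R}}e^{-x^2}e^{2xt-t^2}e^{-i\xi x}\,dx=\sqrt{\pi}\,e^{-\xi^2/4}e^{-it\xi}$ and matching powers of $t$ yields $\mathcal{F}[e^{-x^2}H_k](\xi)=\sqrt{\pi}\,(-i\xi)^{k}e^{-\xi^2/4}$. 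For $k=2n$ this is $\sqrt{\pi}\,(-1)^n\xi^{2n}e^{-\xi^2/4}$, an even function of $\xi$.

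Multiplying by $|\xi|^{\alpha}$ and using $\xi^{2n}=|\xi|^{2n}$, the symbol becomes $\sqrt{\pi}\,(-1)^n|\xi|^{\alpha+2n}e^{-\xi^2/4}$. Inverting, and exploiting that the integrand is even so that the complex exponential collapses to a cosine, I would write
\begin{equation*}
(-\Delta)^{\alpha/2}\bigl(e^{-x^2}H_{2n}\bigr)(x)=\frac{(-1)^n}{\sqrt{\pi}}\int_{0}^{\infty}\xi^{\alpha+2n}e^{-\xi^2/4}\cos(\xi x)\,d\xi .
\end{equation*}
The remaining integral is exactly of Bessel type: substituting $\cos(\xi x)=\sqrt{\tfrac{\pi x\xi}{2}}\,J_{-1/2}(x\xi)$ from (\ref{bassel_half}) turns it into $\sqrt{\tfrac{\pi x}{2}}\int_0^\infty \xi^{\alpha+2n+1/2}e^{-\xi^2/4}J_{-1/2}(x\xi)\,d\xi$, which matches the master formula (\ref{bassel_half_II}) with $\mu=-\tfrac12$, $p=\tfrac12$, $b=x$ and $\nu=\alpha+2n+\tfrac32$. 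Since $\tfrac{\mu+\nu}{2}=n+\tfrac{\alpha}{2}+\tfrac12$, $\mu+1=\tfrac12$ and $-\tfrac{b^2}{4p^2}=-x^2$, this produces precisely the hypergeometric function ${}_1F_1(n+\tfrac{\alpha}{2}+\tfrac12,\tfrac12,-x^2)$ appearing in the claim.

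The final step is pure bookkeeping of constants. Collecting the powers of two coming from $p=\tfrac12$ one finds $\int_0^\infty \xi^{\alpha+2n}e^{-\xi^2/4}\cos(\xi x)\,d\xi=2^{\alpha+2n}\Gamma(n+\tfrac{\alpha}{2}+\tfrac12)\,{}_1F_1(n+\tfrac{\alpha}{2}+\tfrac12,\tfrac12,-x^2)$; reinstating the prefactor $\bigl(2^{2n}(2n)!\bigr)^{-1/2}$ then leaves the coefficient $2^{\alpha+n}(-1)^n\bigl(\sqrt{\pi}\sqrt{(2n)!}\bigr)^{-1}\Gamma(n+\tfrac{\alpha}{2}+\tfrac12)$. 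To see that this equals the stated $2^{\alpha}\tfrac{(-1)^n\sqrt{(2n)!}}{2^n n!}\tfrac{\Gamma(n+\alpha/2+1/2)}{\Gamma(n+1/2)}$, I would invoke the Legendre duplication identity $\Gamma(n+\tfrac12)=\sqrt{\pi}\,(2n)!/(4^n n!)$, which reconciles the two forms exactly. I expect the only delicate points to be the justification of the contour shift in the Gaussian integral (routine, since the shift is purely imaginary) and the careful matching of the parameters $(\mu,\nu,p,b)$ in (\ref{bassel_half_II}); it is there that an arithmetic slip would most easily creep in, so that parameter identification is the real heart of the argument.
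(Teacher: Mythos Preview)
Your proof is correct and follows the same Fourier-transform route as the paper: compute $\mathcal{F}[\widetilde{H}_{2n}]$, multiply by $|\xi|^{\alpha}$, and invert via the cosine integral, arriving at the same ${}_1F_1$. The only difference is in how the two integrals are evaluated---the paper simply cites tables from \cite{integraltransform}, whereas you derive the forward transform from the Hermite generating function and evaluate the cosine integral via the Bessel identity (\ref{bassel_half})--(\ref{bassel_half_II}) already recorded in the preliminaries, which makes your argument more self-contained but is not a substantively different method.
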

\begin{proof}
Consider the forward Fourier transform, we have
\begin{align*}
\mathcal{F}{\left[\widetilde{H}_{2n}\right]}(\xi)
&=\frac{1}{\sqrt{2\pi}\sqrt{2^{2n}(2n)!}}\int_{\mathbb{R}}\exp(-x^2)H_{2n}(x)e^{-\textmd{i}x\xi}dx\\
&=\frac{2}{\sqrt{2\pi}\sqrt{2^{2n}(2n)!}}\int_{\mathbb{R}^+}\exp(-x^2)H_{2n}(x)\cos(x\xi)dx\\
&=\frac{(-1)^{n}}{\sqrt{2}\sqrt{2^{2n}(2n)!}}\xi^{2n}e^{-\frac{\xi^{2}}{4}}.
\end{align*}
Then by considering the inverse Fourier transform we have
\begin{align*}
(-\Delta)^{\alpha/2}\widetilde{H}_{2n}(x)
&=\frac{1}{\sqrt{2\pi}}\int_{\mathbb{R}}|\xi|^{\alpha}\mathcal{F}{\left[\widetilde{H}_{2n}\right]}(\xi)e^{\textmd{i}x\xi}d\xi\\
&=\frac{1}{\sqrt{2\pi}}\frac{(-1)^{n}}{\sqrt{2}\sqrt{2^{2n}(2n)!}}\int_{\mathbb{R}}|\xi|^{\alpha}\xi^{2n}e^{-\frac{\xi^{2}}{4}}e^{\textmd{i}x\xi}d\xi\\
&=\frac{2}{2\sqrt{\pi}}\frac{(-1)^{n}}{\sqrt{2^{2n}(2n)!}}\int_{\mathbb{R}^+}\xi^{2n+\alpha}e^{-\frac{\xi^{2}}{4}}\cos(x\xi)d\xi\\
&=\frac{(-1)^{n}}{\sqrt{\pi}\sqrt{2^{2n}(2n)!}}2^{2n+\alpha}\Gamma\left(\frac{2n+\alpha+1}{2}\right){}_1F_1\left(\frac{2n+\alpha+1}{2},\frac{1}{2};-x^2\right)\\
&=2^{\alpha}\dfrac{(-1)^{n}\sqrt{(2n)!}}{2^{n}n!}\dfrac{\Gamma\left(n+\frac{\alpha}{2}+\frac{1}{2}\right)}{\Gamma\left(n+\frac{1}{2}\right)}{}_1F_{1}\left(n+\dfrac{\alpha}{2}+\dfrac{1}{2},\dfrac{1}{2},-x^2\right).
\end{align*}
This completes the proof.
\end{proof}
\vskip .3cm

In the above derivation, we have adopted some integral formulas in \cite{integral transform}. Using similar arguments, for the odd terms $\{\widetilde{H}_{2n+1}(x)\},$ we have the following theorem
\begin{theorem}
For $0<\alpha<2,$ it holds
\begin{eqnarray}
  && (-\Delta)^{\alpha/2}\widetilde{H}_{2n+1}(x) \nonumber \\
  &=& 2^{\alpha+1}\dfrac{(-1)^n\sqrt{(2n+1)!}}
  {2^{n+\frac{1}{2}}n!}\dfrac{\Gamma\left(n+\frac{\alpha}{2}+\frac{3}{2}\right)}
  {\Gamma\left(n+\frac{3}{2}\right)}x{}_1F_{1}\left(n+\dfrac{\alpha}{2}+\dfrac{3}{2},\dfrac{3}{2},-x^2\right).
\end{eqnarray}
\end{theorem}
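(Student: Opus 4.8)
The plan is to follow exactly the template set by the even case in the preceding theorem, replacing the cosine transform by a sine transform throughout. The only structural difference is one of parity: since $H_{2n+1}$ is an odd polynomial and the Gaussian weight is even, the over-scaled basis $\widetilde{H}_{2n+1}$ defined in (\ref{associated Hermite function}) is an \emph{odd} function, so its Fourier transform is purely imaginary and collapses to a sine integral rather than a cosine integral.

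First I would compute the forward Fourier transform. Writing $\mathcal{F}[\widetilde{H}_{2n+1}](\xi)=\frac{1}{\sqrt{2\pi}\sqrt{2^{2n+1}(2n+1)!}}\int_{\mathbb{R}}e^{-x^2}H_{2n+1}(x)e^{-\mathrm{i}x\xi}\,dx$ and splitting $e^{-\mathrm{i}x\xi}$ into even and odd parts, the cosine contribution vanishes by oddness and one is left with $-\frac{2\mathrm{i}}{\sqrt{2\pi}\sqrt{2^{2n+1}(2n+1)!}}\int_{\mathbb{R}^+}e^{-x^2}H_{2n+1}(x)\sin(x\xi)\,dx$. Using the sine-transform companion of the cosine integral invoked in the even case (from \cite{integral transform}), this evaluates to $\frac{-\mathrm{i}(-1)^n}{\sqrt{2}\sqrt{2^{2n+1}(2n+1)!}}\,\xi^{2n+1}e^{-\xi^2/4}$, the expected odd analogue of the even-case spectrum.

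Next I would apply the Fourier characterization (\ref{via fourier transform}) of $(-\Delta)^{\alpha/2}$ and take the inverse transform, so that $(-\Delta)^{\alpha/2}\widetilde{H}_{2n+1}(x)=\frac{1}{\sqrt{2\pi}}\int_{\mathbb{R}}|\xi|^{\alpha}\mathcal{F}[\widetilde{H}_{2n+1}](\xi)e^{\mathrm{i}x\xi}\,d\xi$. Again the integrand $|\xi|^{\alpha}\xi^{2n+1}e^{-\xi^2/4}$ is odd, so only the $\mathrm{i}\sin(x\xi)$ part survives; the two factors of $\mathrm{i}$ combine to $+1$, making the output genuinely real, and the integral reduces to $2\int_{\mathbb{R}^+}\xi^{\alpha+2n+1}e^{-\xi^2/4}\sin(x\xi)\,d\xi$ up to the accumulated constants. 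I would then rewrite $\sin(x\xi)$ through the half-order Bessel function via $J_{1/2}(x)=\sqrt{2/(\pi x)}\,\sin x$, the odd counterpart of (\ref{bassel_half}), and apply the master integral (\ref{bassel_half_II}) with the choices $\mu=\tfrac12$, $b=x$, $p=\tfrac12$ and $\nu=\alpha+2n+\tfrac52$. These give $\tfrac{\mu+\nu}{2}=n+\tfrac{\alpha}{2}+\tfrac32$, $\mu+1=\tfrac32$ and $-\tfrac{b^2}{4p^2}=-x^2$, producing exactly the ${}_1F_1\!\left(n+\tfrac{\alpha}{2}+\tfrac32,\tfrac32,-x^2\right)$ in the statement, while the factor $x$ emerges by combining the $\sqrt{x}$ of the Bessel substitution with the $(b/2p)^{1/2}$ factor of the master formula.

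The hard part will be the final constant bookkeeping rather than any analytic subtlety. After collecting the powers of $2$, the factor $\Gamma(3/2)=\tfrac{\sqrt\pi}{2}$ coming from $\mu+1=\tfrac32$, and the prefactors of the Fourier transform, the emerging coefficient appears in the form $(-1)^n\,2^{\alpha+n+1/2}\,\tfrac{1}{\sqrt{(2n+1)!}}\,\tfrac{\Gamma(n+\alpha/2+3/2)}{\Gamma(3/2)}$; converting this into the stated $2^{\alpha+1}\tfrac{(-1)^n\sqrt{(2n+1)!}}{2^{n+1/2}n!}\tfrac{\Gamma(n+\alpha/2+3/2)}{\Gamma(n+3/2)}$ requires the Legendre duplication identity in the guise $\Gamma(n+\tfrac32)=\tfrac{(2n+1)!\sqrt\pi}{2\cdot 4^{n}\,n!}$. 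Verifying that this identity reconciles the two coefficient expressions (equivalently, that $2^{2n}n!\,\Gamma(n+\tfrac32)=\tfrac{\sqrt\pi}{2}(2n+1)!$) is the only place where care is needed; once this is checked, the identity follows.
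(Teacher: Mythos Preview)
Your proposal is correct and follows essentially the same route as the paper, which simply states that the odd case proceeds ``using similar arguments'' to Theorem~3.1: forward Fourier transform (sine replacing cosine by parity), multiplication by $|\xi|^{\alpha}$, and inverse transform evaluated through an integral-table identity. Your choice to make the last step explicit via $J_{1/2}$ and formula~(\ref{bassel_half_II}) is a minor elaboration rather than a departure---the paper just cites \cite{integral transform} for the corresponding cosine integral without spelling out a Bessel route---and your identification of the Legendre duplication identity $n!\,\Gamma(n+\tfrac32)=\tfrac{\sqrt{\pi}}{2^{2n+1}}(2n+1)!$ as the key to reconciling constants is exactly what is needed.
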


By Theorems 3.1-3.2, we get the following explicit formula for the components of the differential matrix (DM)
\begin{eqnarray}\label{DM}
&& \widetilde{\mathcal{D}}_{ij}^{\alpha}=(-\Delta)^{\alpha/2}\widetilde{H}_{j}(x_{i}) \\
&=&\begin{cases}
2^{\alpha}\dfrac{(-1)^{n}\sqrt{(2n)!}}{2^{n}n!}\dfrac{\Gamma\left(n+\frac{\alpha}{2}+\frac{1}{2}\right)}{\Gamma\left(n+\frac{1}{2}\right)}{}_1F_{1}\left(n+\dfrac{\alpha}{2}+\dfrac{1}{2},\dfrac{1}{2},-x_{i}^2\right), \quad j=2n; \nonumber\\
2^{\alpha+1}\dfrac{(-1)^n\sqrt{(2n+1)!}}{2^{n+\frac{1}{2}}n!}\dfrac{\Gamma\left(n+\frac{\alpha}{2}+\frac{3}{2}\right)}{
\Gamma\left(n+\frac{3}{2}\right)}x_{i}{}_1F_{1}\left(n+\dfrac{\alpha}{2}+\dfrac{3}{2},\dfrac{3}{2},-x_{i}^2\right),\quad j=2n+1. \nonumber\\
\end{cases}
\end{eqnarray}
We now summarize the procedure for computing the components of the DM:
\begin{itemize}
\item For each $x_i,$ compute the quantities $\widetilde{\mathcal{D}}^\alpha_{ij}$ with $j=0,1,2,3$, by the above formula. Notice that one has to deal with confluent hypergeometric function ${}_1F_1$ and this may be non-trivial, as by definition (\ref{expansion definition}), this is an infinite expansion.  Nevertheless, one can find a fast \& accurate algorithm for example in \cite{computent1F1}.

\item Compute the quantities $\widetilde{\mathcal{D}}^\alpha_{ij}$ for $4<j \leq N-1$ in a recurrence way using the recurrence formula (\ref{recurrence relation}).
\end{itemize}
In general, the above DM is easy to construct, and in fact, the matrix can be stored in priori (offline). We provide in Fig. \ref{tildeHcondition} the condition number of the differential matrix with respect to the number of collocation points $N.$ It is noticed that the condition number grows very fast with respect to $N.$ Thus, efficient pre-conditioners should be designed for practice applications.
\begin{figure}[htbp]
\centering
\includegraphics[width=8cm]{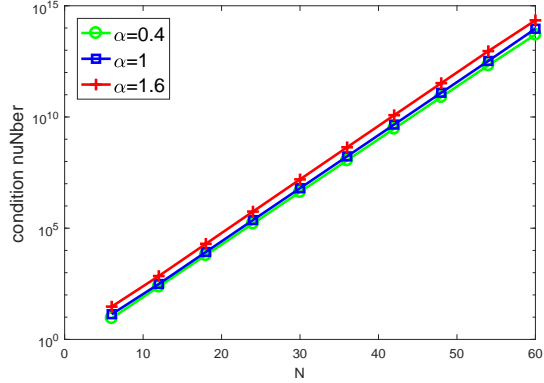}
\caption{Condition number of the differentiation matrix} \label{tildeHcondition}
\end{figure}
Notice that we have used the definition (\ref{via fourier transform}) to derive the differential matrix, however, one can also adopt the definition (\ref{sigular representation}) to derive a similar differential matrix.

\subsection{The two dimensional case}

For multi-dimensional cases, the bases used will be the tensorized 1D bases. Here we take the two dimensional case as an example. To this end, the two dimensional bases take the following  form $\{\widetilde{H}_{n}(x)\widetilde{H}_{m}(y)\}_{n,m}.$

By \eqref{via fourier transform}, we know that
\begin{align*}
\mathcal{F}\left[\widetilde{H}_{n}\widetilde{H}_{m}\right](\xi,\eta)
&=\frac{1}{2\pi}\int_{\mathbb{R}}\int_{\mathbb{R}}\widetilde{H}_{n}(x)\widetilde{H}_{m}(y)e^{-\textmd{i}x\xi}e^{-\textmd{i}y\eta}dxdy \nonumber \\
&=\frac{1}{2\pi}\int_{\mathbb{R}}\widetilde{H}_{n}(x)e^{-\textmd{i}x\xi}dx\int_{\mathbb{R}}\widetilde{H}_{m}(y)e^{-\textmd{i}y\eta}dy=\mathcal{F}{\left[\widetilde{H}_{n}\right]}(\xi)\mathcal{F}{\left[\widetilde{H}_{m}\right]}(\eta).
\end{align*}
Then, using the inverse Fourier transform gives
\begin{align}
&\qquad(-\Delta)^{\alpha/2}\left[\widetilde{H}_{n}(x)\widetilde{H}_{m}(y)\right] \nonumber \\
&=\frac{1}{2\pi}\int_{\mathbb{R}}\int_{\mathbb{R}}\left(\xi^2+\eta^2\right)^{\frac{\alpha}{2}}
\mathcal{F}{\left[\widetilde{H}_{n}\right]}(\xi)\mathcal{F}{\left[\widetilde{H}_{m}\right]}(\eta)e^{\textmd{i}x\xi}e^{\textmd{i}y\eta}d\xi d\eta.
\end{align}
We take the double-even term $\{\widetilde{H}_{2n}(x)\widetilde{H}_{2m}(y)\}$ as an example. We denote
\begin{equation*}
L_{nm}=\frac{(-1)^{n}}{\sqrt{2}\sqrt{2^{2n}(2n)!}}\frac{(-1)^{m}}{\sqrt{2}\sqrt{2^{2m}(2m)!}}.
\end{equation*}
Then we have
\begin{align*}
&\qquad(-\Delta)^{\alpha/2}\left[\widetilde{H}_{2n}(x)\widetilde{H}_{2m}(y)\right] \\
&=\frac{L_{nm}}{2\pi}\int_{\mathbb{R}}\int_{\mathbb{R}}\left(\xi^2+\eta^2\right)^{\frac{\alpha}
{2}}\xi^{2n}e^{-\frac{\xi^{2}}{4}}\eta^{2m}e^{-\frac{\eta^{2}}{4}}e^{\textmd{i}x\xi}e^{\textmd{i}y\eta}d\xi d\eta\\
&=\frac{4L_{nm}}{2\pi}\int_{\mathbb{R}^+}\int_{\mathbb{R}^+}\left(\xi^2+\eta^2\right)^{\frac{\alpha}{2}}\xi^{2n}e^{-\frac{\xi^{2}}{4}}\eta^{2m}e^{-\frac{\eta^{2}}{4}}\cos(x\xi)\cos(y\eta)d\xi d\eta\\
&=\frac{4L_{nm}}{2\pi} \,\int_{\mathbb{R}^+}\mathcal{I}(x,y,\rho)\,\,\rho^{2n+2m+\alpha+1}e^{-\frac{\rho^2}{4}}d\rho,
\end{align*}
where
$$
\mathcal{I}(x,y,\rho)=\int_{0}^{\frac{\pi}{2}}(\cos{\theta})^{2n}(\sin{\theta})^{2m}
\cos(x\rho\cos{\theta})\cos(y\rho\sin{\theta})d\theta.
$$
By the definition of the Bessel function (\ref{bassel_half}) and the property (\ref{bassel_p2})  we have
\begin{align*}
&\quad\int_{0}^{\frac{\pi}{2}}(\cos{\theta})^{2n}(\sin{\theta})^{2m}\cos(p\cos{\theta})\cos(q\sin{\theta})d\theta\\
&=\frac{\pi}{2}(pq)^{\frac{1}{2}}\int_{0}^{1}(1-x^2)^{n-\frac{1}{4}}x^{2m+\frac{1}{2}}J_{-\frac{1}{2}}(p\sqrt{1-x^2})J_{-\frac{1}{2}}(qx)dx\\
&=\frac{\pi}{2}\left(\tfrac{\partial}{p\partial p}\right)^{n}\left(\tfrac{\partial}{q\partial q}\right)^{m}\left[\left(\sqrt{p^2+q^2}\right)^{-(n+m)}J_{n+m}\left(\sqrt{p^2+q^2}\right)\right].
\end{align*}
Notice that by property (\ref{bassel_p1}) we have
\begin{align*}
&\quad \left(\tfrac{\partial}{p\partial p}\right)^{n}\left(\tfrac{\partial}{q\partial q}\right)^{m}\left[\left(\sqrt{p^2+q^2}\right)^{-(n+m)}J_{n+m}\left(\sqrt{p^2+q^2}\right)\right]\\
&=(-1)^{n+m}\left(\sqrt{p^2+q^2}\right)^{-(2n+2m)}J_{2n+2m}\left(\sqrt{p^2+q^2}\right).
\end{align*}
Now, by replacing $p$ and $q$ with $\rho x$ and $\rho y$, we obtain
\begin{align*}
\mathcal{I}(x,y,\rho)=\frac{\pi}{2}(-1)^{n+m}\rho^{-(2n+2m)}\left(\sqrt{x^2+y^2}\right)^{-(2n+2m)}J_{2n+2m}\left(\rho\sqrt{x^2+y^2}\right).
\end{align*}
We then do inverse Fourier transform to obtain (for $t=m+n$)
\begin{small}
\begin{align*}
&\quad \left(-\Delta\right)^{\alpha/2}\left[\widetilde{H}_{2n}(x)\widetilde{H}_{2m}(y)\right]\\
\vspace{0.05cm}
&=(-1)^{t}L_{nm}(x^2+y^2)^{-t}\int_{\mathbb{R}^+}\rho^{\alpha+1}e^{-\frac{\rho^2}{4}}J_{2t}\left(\rho\sqrt{x^2+y^2}\right)d\rho\\
&=\frac{2^{\alpha}\Gamma(t+\frac{\alpha}{2}+1)}{\sqrt{2^{2n}(2n)!}\sqrt{2^{2m}(2m)!}\Gamma(2t+1)}
{}_1F_1\left(t+\frac{\alpha}{2}+1;2t+1;-\left(x^2+y^2\right)\right).
\end{align*}
\end{small}
In the above derivation, we have used property (\ref{bassel_half_II}). In a similar way, we can derive similar results for the bases of forms  $\widetilde{H}_{2n+1}(x)\widetilde{H}_{2m}(y)$, $\widetilde{H}_{2n}(x)\widetilde{H}_{2m+1}(y)$ and $\widetilde{H}_{2n+1}(x)\widetilde{H}_{2m+1}(y)$. To summaries these results in an unified way, we introduce the parameter $\delta_1$ and $\delta_2$, with $\delta_1,\delta_2 \in \{0,1\}$. And let
\begin{align*}
&a=n+m+\frac{\alpha}{2}+\delta_1+\delta_2+1; \quad  b=2n+2m+\delta_1+\delta_2+1.
\end{align*}
Using similar arguments as above, we can derive that
\begin{align*}
(-\Delta)^{\alpha/2}\left[\widetilde{H}_{2n+\delta_1}(x)\widetilde{H}_{2m+\delta_2}(y)\right]= C_{x,y}(\alpha, m,n,\delta_1,\delta_2) \frac{\Gamma(a)}{\Gamma(b)} {}_1F_1\left(a;b;-\left(x^2+y^2\right)\right),
\end{align*}
with
\begin{align*}
C_{x,y}(\alpha,m,n,\delta_1,\delta_2)=\frac{2^{\alpha+\delta_1+\delta_2}x^{\delta_1}y^{\delta_2}}{\sqrt{2^{2n+\delta_1}(2n+\delta_1)!}
\sqrt{2^{2m+\delta_2}(2m+\delta_2)!}}.
\end{align*}

We finally provide the explicit formulas for the components of the DM as following
\begin{align}
\widetilde{\mathcal{D}}&_{(i-1)*N+j,(p-1)*N+q}^{\alpha}=(-\Delta)^{\alpha/2}\left\{\widetilde{H}_{p}(x_i)\widetilde{H}_{q}(y_j)\right\}\\
&=\begin{cases}
\dfrac{2^{\alpha}\Gamma(n+m+\frac{\alpha}{2}+1){}_1F_1\left(n+m+\frac{\alpha}{2}+1;2n+2m+1;-\left(x_i^2+y_j^2\right)\right)}{\sqrt{2^{2n}(2n)!}\sqrt{2^{2m}(2m)!}\Gamma(2n+2m+1)}, \nonumber \\
\qquad \qquad \qquad \qquad \qquad \qquad \qquad \qquad \qquad \qquad \qquad    p=2n, q=2m;  \nonumber \\
\dfrac{2^{\alpha+1}\Gamma(n+m+\frac{\alpha}{2}+2)y_{j}{}_1F_1\left(n+m+\frac{\alpha}{2}+2;2n+2m+2;-\left(x_i^2+y_j^2\right)\right)}{\sqrt{2^{2n}(2n)!}\sqrt{2^{2m+1}(2m+1)!}\Gamma(2n+2m+2)}, \nonumber \\
\qquad \qquad \qquad \qquad \qquad \qquad \qquad \qquad \qquad  \qquad \qquad   p=2n, q=2m+1;  \nonumber \\
\dfrac{2^{\alpha+1}\Gamma(n+m+\frac{\alpha}{2}+2)x_{i}{}_1F_1\left(n+m+\frac{\alpha}{2}+2;2n+2m+2;-\left(x_i^2+y_j^2\right)\right)}{\sqrt{2^{2n+1}(2n+1)!}\sqrt{2^{2m}(2m)!}\Gamma(2n+2m+2)}, \nonumber \\
\qquad \qquad \qquad \qquad \qquad \qquad \qquad \qquad \qquad  \qquad \qquad   p=2n+1, q=2m;  \nonumber \\
\dfrac{2^{\alpha+2}\Gamma(n+m+\frac{\alpha}{2}+3)x_{i}y_{j}{}_1F_1\left(n+m+\frac{\alpha}{2}+3;2n+2m+3;-\left(x_i^2+y_j^2\right)\right)}{\sqrt{2^{2n+1}(2n+1)!}\sqrt{2^{2m+1}(2m+1)!}\Gamma(2n+2m+3)}, \nonumber \\
\qquad \qquad \qquad \qquad \qquad \qquad \qquad \qquad \qquad \qquad \qquad    p=2n+1, q=2m+1.
\end{cases}
\end{align}
Again, the above components can be computed in a similar procedure as in the one-dimensional case. We remark that one may also derive such formulas for the three dimensional case, and we omit it here as it may involves complex notations.

\subsection{The use of scaling factors}
It is well known that for Hermite-type spectral methods,  the convergence rate deteriorates if the decay rates between the solution and the bases function have a relatively large gap.
A remedy to fix this problem is to use the so-called
scaling factor  \cite{Sun,Scaling}. We now introduce the basic idea of the scaling factor and choose the one-dimensional case as an example for illustration.
To this end, let
$u(x)$ be a function that decay exponentially, namely,
\begin{equation}\label{eq:decay}
|u(x)|\sim 0,  \quad \forall \;  |x|>M,
\end{equation}
where $M>0$ is some constant.
The idea of using the scaling factor is to expand $u$ as
\begin{equation}\label{eq:herfun_p=6}
u(x)=\sum_{n=0}^{N-1} c_n \widetilde{H}_n(r x) \,\,\Leftrightarrow\,\,
u\left(x/r\right)=\sum_{n=0}^{N-1} c_n \widetilde{H}_n(x),
\end{equation}
where $r >0$ is a scaling factor. The key point of using
$r$ is to scale the Hermite-Gauss nodes $\{x_k\}_{k=0}^{N-1}$ so that the collocation points $\{x_k/r\}_{k=0}^{N-1}$
are well within the effective support of $u.$ This suggests the following choice
\begin{equation}
\max_{0 \leq k \leq N-1}\{|x_k|\}/r \leq M \quad  \Rightarrow \quad r = \max_{0\leq k \leq N-1}\{|x_k|\}/M.
\end{equation}
We remark however, in practice, finding the quantity $M$ may be non-trivial and thus the optimal scaling factor is hard to obtain in general.

We now show how to include the scaling factor in our spectral collocation methods for the fractional PDEs. Let us illustrate the idea in 1D. We now seek to the following expansion
\begin{equation*}
  u_{N}(x)=\sum_{n=0}^{N-1}c_{n}\widetilde{H}_{n}(rx).
\end{equation*}
By inserting the expansion to the fractional PDE (\ref{Laplace}) and imposing the collocation condition we obtain
\begin{equation}
\widetilde{\mathcal{D}}^{\alpha,r} \mathbf{c} + \gamma F^r(\mathbf{c})=\mathbf{g},
\end{equation}
where the $\widetilde{\mathcal{D}}^{\alpha,r}$ is the differential matrix with components
\begin{align}
\widetilde{\mathcal{D}}_{ij}^{\alpha,r} =(-\Delta)^{\alpha/2}\widetilde{H}_{j}( rx_{i}), \quad i, j =0, ... , N-1.
\end{align}
We need to deal with the fractional Laplacian of $\widetilde{H}_{j}\left(rx\right)$. To this end, suppose that
\begin{equation*}
\left(-\Delta\right)^{\alpha/2}v\left(x\right)=\phi\left(x\right),
\end{equation*}
Then the fractional Laplacian of $v_{r}\left(x\right)=v\left(rx\right)$ is
\begin{equation*}
\left(-\Delta\right)^{\alpha/2}v_{r}\left(x\right)=r^{\alpha}\phi\left(rx\right).
\end{equation*}
This can be simply proved by using the definition \eqref{sigular representation}. By using together the above argument and the DM formula (\ref{DM}), we have
\begin{align}
\widetilde{\mathcal{D}}_{ij}^{\alpha,r}&=(-\Delta)^{\alpha/2}\widetilde{H}_{j}( rx_{i}) \\
&=\begin{cases}
(2r)^{\alpha}\dfrac{(-1)^{n}\sqrt{(2n)!}}{2^{n}n!}\dfrac{\Gamma\left(n+\frac{\alpha}{2}+\frac{1}{2}\right)}
{\Gamma\left(n+\frac{1}{2}\right)}{}_1F_{1}\left(n+\frac{\alpha}{2}+\frac{1}{2},\frac{1}{2},-z_{i}^2\right), \quad j =2n ; \nonumber\\
2^{\alpha+1}r^{\alpha}\dfrac{\!(-1)^n\!\!\sqrt{(2n+1)!}\!}{2^{n+\tfrac{1}{2}}n!}\dfrac
{\Gamma\left(n+\tfrac{\alpha}{2}+\tfrac{3}{2}\right)}{\Gamma\left(n+\tfrac{3}{2}\right)}z_{i}{}_1F_{1}\left(\!n+\tfrac{\alpha}{2}+\frac{3}{2}\!,\tfrac{3}{2},-z_{i}^2\right), \quad j =2n+1. \nonumber\\
\end{cases}
\end{align}
where $z_i = r x_i$ for $i=0,1, ..., N-1.$

\subsection{Applications to multi-term fractional PDEs}
In this section, we claim that our spectral collocation method can be easily applied to the multi-term fractional PDEs.  We shall still take the one-dimensional case as an example. Consider the following multi-term fractional PDEs
\begin{equation}\label{discrete multi-term}
\sum_{j=1}^{J} (-\Delta)^{\alpha_{j}/2}u(x) + \gamma f(u) = g(x), \quad x\in\mathbb{R}.
\end{equation}
The above problem is motivated by the approximation of distributed order fractional models
using a quadrature rule, see e.g. \cite{distributeDiethelm,distributed_I,NMTMA}.

For the above multi-term models, by inserting the Hermite expansion (\ref{expansion}) into the equation and
imposing the collocation condition, one gets the following system:
\begin{equation}
\widetilde{\mathcal{D}}^{\mathcal{J}} \mathbf{c} + \gamma F(\mathbf{c})=\mathbf{g} \qquad  \textmd{with} \qquad  \widetilde{\mathcal{D}}^{\mathcal{J}}= \sum_{j=1}^{J} \,\widetilde{\mathcal{D}}^{\alpha_{j}}.
\end{equation}
Notice that the components of each differential matrix $\widetilde{\mathcal{D}}^{\alpha_{j}}$ can be computed by the explicit formula $(\ref{DM}).$

\section{Spectral collocation methods based on the normalized Hermite functions $\{\widetilde{H}_n\}_n.$}
\label{sec4}
In the last section, we have proposed the spectral methods with the over-scaled bases $\{\widetilde{H}_n\}_n.$ While the associated DM is easy to compute, its condition number grows fast with respect to the number of collocation points, and this is due to the poor property of the bases. In this section, we shall discuss the spectral collocation methods based on the normalized Hermite functions $\{\widehat{H}_n\}_n.$ The main task is still to derive the explicit formula for the differential matrix.

\subsection{The one dimensional case}
The aim is the present the explicit formula for $\widehat{\mathcal{D}}_{mn}^{\alpha}=(-\Delta)^{\alpha/2}\widehat{H}_{n}(x_{m}).$ This also relies on the forward/inverse Fourier transform as done in the last section.

By the forward Fourier transform, we Notice that
\begin{equation}
\mathcal{F}\left\{\widehat{H}_{n}(x)\right\}=\frac{1}{\sqrt{2\pi}}\int_{\mathbb{R}}\widehat{H}_{n}(x)e^{-\textmd{i}\xi x}dx=(-\textmd{i})^n\widehat{H}_{n}(\xi).
\end{equation}
That is, the normalized Hermite functions $\{\widehat{H}_{n}\}_n$ are eigenfunctions of the Fourier transform with eigenvalues $\{(-\textmd{i})^n\}_n$ with $\textmd{i}=\sqrt{-1}.$

A more difficult part is to compute the inverse Fourier transform. By the above fact and (\ref{via fourier transform}), we have to deal with the inverse fourier transform of  $\left\{(-\textmd{i})^n |\xi|^{\alpha} \widehat{H}_{n}(\xi)\right\}_n.$ Notice that we can write
\begin{equation}{\label{expansion_II}}
  \widehat{H}_{n}(\xi)=\sum_{k=0}^{n}\hat{a}_{n,k}\exp(-\xi^2/2)\xi^{k},   \quad n=0, 1, ..., N-1
\end{equation}
with $\widehat{a}_{n,k}=\frac{1}{\sqrt{2^{n}n!}}a_{n,k}$ for $k\leq n,$ where $a_{n,k}$ can be computed in a recursion way
\begin{align*}
& a_{0,0}=1, \,\, a_{1,0}=0, \,\, a_{1,1}=2;\\
& a_{n+1,k}=-a_{n,k+1}, \quad k=0;\\
& a_{n+1,k}=2a_{n,k-1}-(k+1)a_{n,k+1}, \quad k>0.
\end{align*}
Let us first consider the even terms with $k=2m,$ the inverse Fourier transform yields
 \begin{align*}
 & \mathcal{F}^{-1}{\left[\exp(-\xi^2/2)\xi^{2m}|\xi|^{\alpha}\right]}(x)\\
 &=\frac{1}{\sqrt{2\pi}}\int_{\mathbb{R}}\exp(-\xi^2/2)\xi^{2m}|\xi|^{\alpha}e^{\textmd{i}\xi x}d\xi\\
 &=\frac{2}{\sqrt{2\pi}}\int_{\mathbb{R}^+}\exp(-\xi^2/2)\xi^{2m+\alpha}\cos(\xi x)d\xi\\
 &=\frac{2^{\frac{2m+\alpha}{2}}}{\sqrt{\pi}}\Gamma\left(\frac{2m+1+\alpha}{2}\right){}_{1}F_{1}\left(\frac{2m+1+\alpha}{2},\frac{1}{2},-\frac{x^2}{2}\right).
 \end{align*}
Then for the odd terms with $k=2m+1$, it holds
 \begin{align*}
 & \mathcal{F}^{-1}{\left[\exp(-\xi^2/2)\xi^{2m+1}|\xi|^{\alpha}\right]}(x) \\
   &=\frac{1}{\sqrt{2\pi}}\int_{\mathbb{R}}\exp(-\xi^2/2)\xi^{2m+1}|\xi|^{\alpha}e^{\textmd{i}\xi x}d\xi\\
   &=\frac{2\textmd{i}}{\sqrt{2\pi}}\int_{\mathbb{R}^+}\exp(-\xi^2/2)\xi^{2m+1+\alpha}\sin(\xi x)d\xi\\
   &=\frac{2^{\frac{2m+2+\alpha}{2}}\textmd{i}}{\sqrt{\pi}}\Gamma\left(\frac{2m+3+\alpha}{2}\right)x{}_{1}F_{1}\left(\frac{2m+3+\alpha}{2},\frac{3}{2},-\frac{x^2}{2}\right).
 \end{align*}
For ease of notations, we denote
\begin{equation*}
F_{k}(x)=\mathcal{F}^{-1}{\left[\exp(-\xi^2/2)\xi^{k}|\xi|^{\alpha}\right]}(x), \quad k=0, 1, ..., N-1.
\end{equation*}
Then by (\ref{expansion_II}) the components of the differentiation matrix yield
\begin{equation}\label{normal_1D}
\widehat{\mathcal{D}}_{mn}^{\alpha}=(-\Delta)^{\alpha/2}\widehat{H}_{n}(x_{m})=(-\textmd{i})^n
\sum\limits_{k=0}^{N-1}\hat{a}_{n,k}F_{k}\left(x_m\right), \quad 0 \leq n, m \leq N-1.
\end{equation}
Notice that when $k>n,$ we set $a_{n,k}=0.$ In Fig. \ref{hatHcondition} we present the condition number of this differential matrix with respect to $N.$
\begin{figure}[htbp]
\centering
\includegraphics[width=8cm]{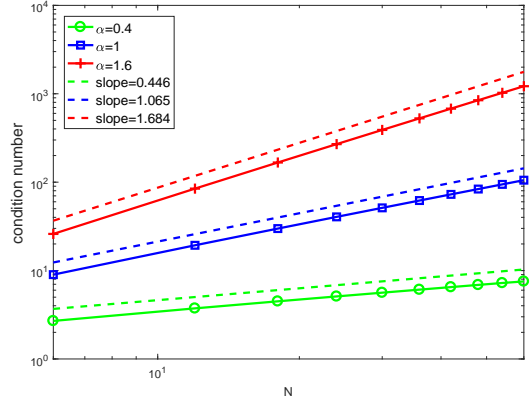}
\caption{Condition number of the differentiation matrix} \label{hatHcondition}
\end{figure}
It is noticed from Fig. \ref{hatHcondition} that the condition number grows algebraically with $N$ -- much well behaved than the previous case (where the over scaled bases are used). In fact the computation complexity of the DM above is almost the same as the formula (\ref{DM}) since each components here is simply a evaluation of a linear combination of the confluent hypergeometric functions.

\subsection{The two dimensional case}
We now consider the two dimensional case. By the forward Fourier transform we have
\begin{align}
\mathcal{F}\left[\widehat{H}_{n}\widehat{H}_{m}\right](\xi,\eta)
&=\frac{1}{2\pi}\int_{\mathbb{R}}\int_{\mathbb{R}}
\widehat{H}_{n}(x)\widehat{H}_{m}(y)e^{-\textmd{i}x\xi}e^{-\textmd{i}y\eta}dxdy \nonumber \\
&=\frac{1}{2\pi}\int_{\mathbb{R}}\widehat{H}_{n}(x)e^{-\textmd{i}x\xi}dx
\int_{\mathbb{R}}\widehat{H}_{m}(y)e^{-\textmd{i}y\eta}dy \nonumber \\
&=(-\textmd{i})^{n+m}\widehat{H}_{n}(\xi)\widehat{H}_{m}(\eta).
\end{align}
Then, by the inverse Fourier transform we obtain
\begin{align*}
(-\Delta)^{\alpha/2}&\left[\widehat{H}_{n}(x)\widehat{H}_{m}(y)\right]=\frac{(-\textmd{i})^{n+m}}{(2\pi)}\int_{\mathbb{R}}
\int_{\mathbb{R}}\left(\xi^2+\eta^2\right)^{\frac{\alpha}{2}}\widehat{H}_{n}(\xi)\widehat{H}_{m}(\eta)e^{\textmd{i}x\xi}e^{\textmd{i}y\eta}d\xi d\eta.
\end{align*}
Similar as in the one dimensional case, we expand $H_{n}(\xi)H_{m}(\eta)$ as a combination of $\xi^{k}\eta^{l}.$ Inspired by (\ref{expansion_II}) we have
\begin{equation*}
\widehat{H}_{n}(\xi)\widehat{H}_{m}(\eta)=\sum_{k=0}^{N-1}\sum_{l=0}^{N-1}\hat{a}_{n,k}\hat{a}_{m,l}
\exp\left(-(\xi^2+\eta^2)/2\right)\xi^{k}\eta^{l}.
\end{equation*}
Next, we should deal with the inverse Fourier transform of terms like  $$\left(\xi^2+\eta^2\right)^{\frac{\alpha}{2}}\exp\left(-\left(\xi^2+\eta^2\right)/2\right)\xi^{k}\eta^{l}, \quad 0\leq k,l \leq N-1. $$
The derivation is similar as in the one dimensional case, and thus we omit the details here. To summarize, we introduce the matrix $F(\xi,\eta)$ as
\begin{align}
&F_{k,l}=\mathcal{F}^{-1}\left\{\left(\xi^2+\eta^2\right)^{\frac{\alpha}{2}}\exp\big(-(\xi^2+\eta^2)/2\big)\xi^{k}\eta^{l}\right\}\nonumber \\
=&\begin{cases}
\dfrac{2^{\alpha/2}(-1)^{p+q}\Gamma(p+q+\frac{\alpha}{2}+1){}_1F_1\left(p+q+\frac{\alpha}{2}+1;2p+2q+1;-\frac{x^2+y^2}{2}\right)}{2^{p+q}\Gamma(2p+2q+1)}, \nonumber \\
\qquad \qquad \qquad \qquad \qquad \qquad \qquad \qquad \qquad \qquad \qquad    k=2p, \,\,l=2q;  \nonumber \\
\dfrac{2^{(\alpha+1)/2}(-1)^{p+q}i\Gamma(p+q+\frac{\alpha}{2}+2)y{}_1F_1\left(p+q+\frac{\alpha}{2}+2;2p+2q+2;-\frac{x^2+y^2}{2}\right)}{2^{p+q}\Gamma(2p+2q+2)}, \nonumber \\
\qquad \qquad \qquad \qquad \qquad \qquad \qquad \qquad \qquad  \qquad \qquad   k=2p,\,\,l=2q+1;  \nonumber \\
\dfrac{2^{(\alpha+1)/2}(-1)^{p+q}i\Gamma(p+q+\frac{\alpha}{2}+2)x{}_1F_1\left(p+q+\frac{\alpha}{2}+2;2p+2q+2;-\frac{x^2+y^2}{2}\right)}{2^{p+q}\Gamma(2p+2q+2)}, \nonumber \\
\qquad \qquad \qquad \qquad \qquad \qquad \qquad \qquad \qquad  \qquad \qquad   k=2p+1, \,\,l=2q;  \nonumber \\
\dfrac{-2^{\alpha/2+1}(-1)^{p+q}\Gamma(p+q+\frac{\alpha}{2}+3)xy{}_1F_1\left(p+q+\frac{\alpha}{2}+3;2p+2q+3;-\frac{x^2+y^2}{2}\right)}{2^{p+q}\Gamma(2p+2q+3)}, \nonumber \\
\qquad \qquad \qquad \qquad \qquad \qquad \qquad \qquad \qquad \qquad \qquad    k=2p+1, \,\,l=2q+1.
\end{cases}
\end{align}
Finally, we provide the explicit formula for the components of the differentiation matrix as follows
\begin{align}
\widehat{\mathcal{D}}_{(p-1)*N+q,(n-1)*N+m}^{\alpha}
&=(-\Delta)^{\alpha/2}\left\{\widehat{H}_{n}(x_p)\widehat{H}_{m}(y_q)\right\} \nonumber \\
&=(-i)^{n+m}\sum\limits_{k=0}^{N-1}\sum\limits_{l=0}^{N-1}\hat{a}_{n,k}\hat{a}_{m,l}F_{k,l}(x_p,y_q).
\end{align}
\begin{remark}
Similar as in Section 3, for the spectral collocation methods with the normalized Hermite functions, one can easily include the scaling factors with slight modifications to the differential matrix. And furthermore, the application to multi-term fractional PDEs is also straightforward.
\end{remark}

\section{Spectral collocation methods with Lagrange type bases}
\label{sec5}
In the above sections, we have derived explicit formulas for the differential matrices by using the Hermite-type bases. In this section, we shall discuss the equivalent spectral collocation methods with Lagrange type bases. For notation simplicity, we shall only present the one dimensional case. Notice that for this type of bases, we still search an (equivalent) approximated solution in the same finite space as in the above sections, however, such an approach indeed results in different differential matrices. By using the Lagrange type bases, we approximate the solution in the following way
\begin{equation*}
u_{N}(x)=\sum_{j=0}^{N-1}u_{j}h_{j}(x), \quad \textmd{with} \quad h_{j}(x_k) = \delta_{jk}, \quad 0\leq j,k\leq N-1.
\end{equation*}
Here the Lagrange type bases $\{h_{j}(x)\}_{j=1}^N$ are defined as
\begin{equation*}
h_{j}(x)=\frac{e^{-x^2/2}}{e^{-x_{j}^2/2}}\prod_{i=0,i\neq j}^{N-1}\frac{x-x_{j}}{x_{i}-x_{j}}, \quad 0\leq j\leq N-1.
\end{equation*}
The associated points $\left\{x_{j}\right\}_{j=0}^{N-1}$ are the Gauss-Hermite points. It is clear that we can express each Lagrange-type basis with the normalized Hermite functions, i.e.,
\begin{equation*}
h_j(x)=\sum_{k=0}^{N-1} \, b_{k}^{j}\widehat{H}_{k}\left(x\right), \quad \textmd{with} \quad  b_{k}^{j}=\frac{1}{\sqrt{\pi}}\widehat{H}_{k}\left(x_{j}\right)\hat{\omega}_{j},  \quad 0\leq j,k\leq N-1,
\end{equation*}
where $\{\hat{\omega}_{j}\}_{j=0}^{N-1}$ is the weights of the Gauss quadrature rule associated with the Hermite functions, which are defined as:
$$\hat{\omega}_{j}=\frac{\sqrt{\pi}}{N\widehat{H}_{N-1}^{2}\left(x_{j}\right)},\quad j=0,1,...,N-1.$$
Consequently,  we can easily derive the associated differential matrix $\widehat{D}^{L,\alpha}$ with Lagrange type bases
\begin{equation*}
\widehat{D}_{i,j}^{L,\alpha}=(-\Delta)^{\alpha/2}h_{j}(x_i)=\sum_{k=0}^{N-1}b_{k}^{j}(-\Delta)^{\alpha/2}\widehat{H}_{k}\left(x_i\right).
\end{equation*}
The quantities $(-\Delta)^{\alpha/2}\widehat{H}_{k}\left(x_i\right)$ can be obtained via equation (\ref{normal_1D}). The condition number of the above differential matrix with respect to the order $N$ is presented in Fig. \ref{collocationcond}. Again, we can see that the condition number grows algebraically respect to the order $N$.
\begin{figure}[htbp]
\centering
\includegraphics[width=8cm]{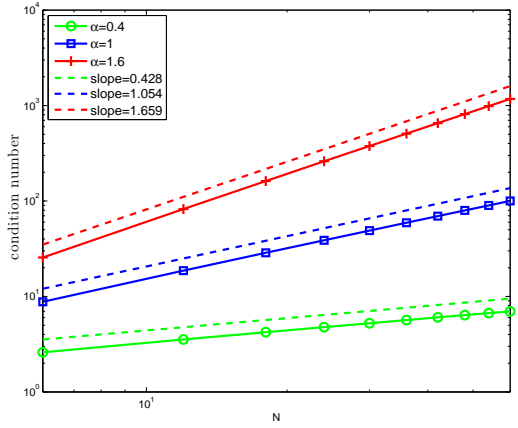}
\caption{Condition number of the differentiation matrix with Lagrange-type bases}\label{collocationcond}
\end{figure}

\section{Numerical examples}
\label{sec6}
In this section, we shall present several constructive examples to show the convergence property of our spectral collocation methods.  In all our computations, we shall report the numerical error both in the weighted norm $e_{w}$ and in the maximum norm $e_{m},$ which are defined respectively as
\begin{align*}
e_{w}=\|u(x)-u_{N}(x)\|_{\omega}^{2}, \quad e_{m}=\max_{j}\left|u(x_{j})-u_{N}(x_{j})\right|.
\end{align*}
Here $\omega(x)=e^{x^2}$ for the bases $\{\widetilde{H}_n(x)\}_n$ and $\omega(x)=1$ for the normalized Hermite functions.

\subsection{The fractional Laplace equation}
Our first example is the fractional Laplace equation
\begin{equation}\label{Example 5.1}
\begin{cases}
  \left(-\Delta\right)^{\alpha/2}u(x)  = g(x), \quad &x\in\mathbb{R},\\
  u(x)=0,\quad &|x| \to \infty.
  \end{cases}
\end{equation}
The right hand side is chosen such that the exact solution is $u(x)=\exp(-x^2)\textmd{sin}x$. Notice that for a given exact solution $u(x),$ the right hand side does not necessary have explicit formula, and in such cases, we shall compute it by expanding $u(x)$ with a large enough number of the basis functions. This is also true for all other examples.
\begin{figure}[htbp]
\centering
\includegraphics[width=12cm,height=5cm]{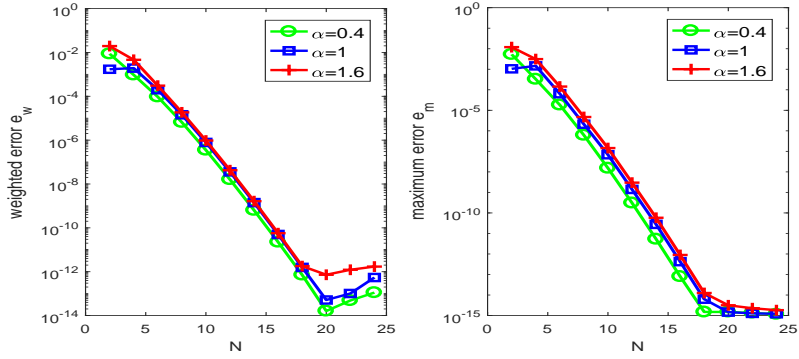}
\caption{Numerical error with the over-scaled bases for (\ref{Example 5.1}) with $u(x)=\exp(-x^2)\sin{x}.$ Left: weighted norm. Right: maximum norm.}\label{tildeHerror1}
\end{figure}

We perform computations with different fractional order, i.e., $\alpha=0.4$, $1$ and $1.6.$ The numerical errors against the numerical of points $N$ with the over scaled bases and the normalized Hermite functions are presented in Fig. \ref{tildeHerror1} and Fig. \ref{hatHerror1}, respectively. For the over-scaled bases, no scaling factor is needed, i.e, $r=1,$ while for the normalized Hermite functions, we choose $r=\sqrt{2}$. In Fig. \ref{tildeHerror1} and Fig. \ref{hatHerror1}, the numerical error in weighted norm and in maximum norm are reported in the left plot and right plot, respectively. It is clear that spectral convergence is obtained for all cases of $\alpha.$ While no scaling factor is needed for the over scaled bases, it is noticed that the convergence is polluted when a larger number of collocation points are used, due to the fast grow of the condition number.

\begin{figure}[htbp]
  \centering
  \includegraphics[width=12cm,height=5cm]{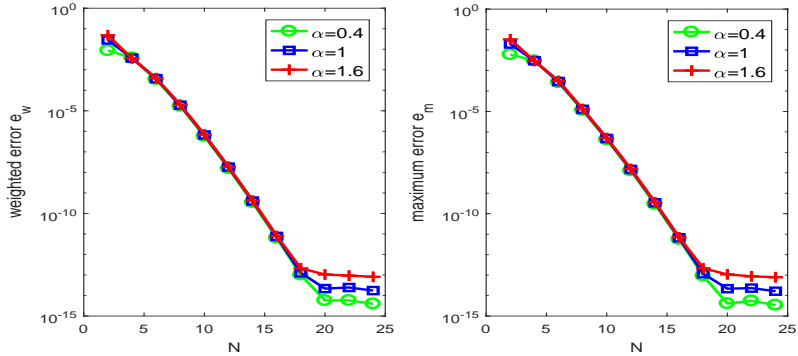}
  \caption{Numerical error with the normalized Hermite functions for (\ref{Example 5.1}) with $u(x)=\exp(-x^2)\sin{x}.$ Left: weighted norm. Right: maximum norm.}\label{hatHerror1}
\end{figure}

\subsection{A linear fractional PDE}

Next, we consider the following fractional PDE
\begin{equation}\label{Example 5.2}
\begin{cases}
(-\Delta)^{\alpha/2}u\left(x\right)+2u\left(x\right)=f\left(x\right),\quad &x\in\mathbb{R}\\
u\left(x\right)=0,\quad &x \to \infty
\end{cases}
\end{equation}
We first test the performance of the over-scaled bases. We set $u(x)=\exp(-\frac{x^2}{2})x^2\cos(x),$ and the right hand side can be computed accordingly.  Again, we consider $\alpha=0.4,1$ and $1.6$. The numerical results with a scaling factor $r=1/\sqrt{2}$ and without the scaling factor ($r=1$) are reported in Fig. \ref{example5.2.1} and Fig. \ref{example121}, respectively. It is clear seen that using a proper scaling factor results in faster convergence  rate for both the weighted error and the maximum error.
\begin{figure}[htbp]
\centering
\includegraphics[width=12cm,height=5cm]{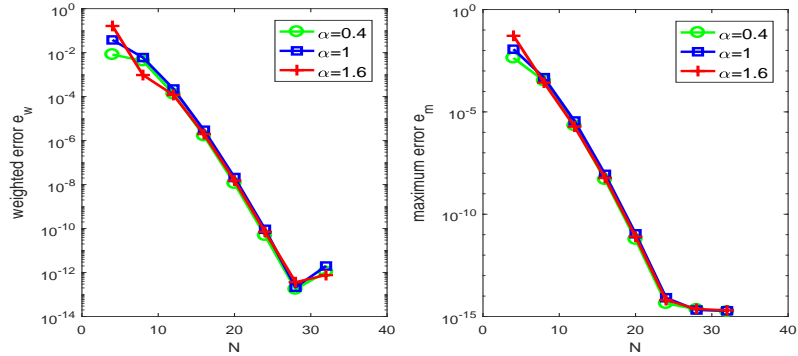}
\caption{Numerical error with the over scaled bases for (\ref{Example 5.2}) with exact solution $u(x)=\exp(-\frac{x^2}{2})x^2\cos(x).$ The scaling factor is $r=1/\sqrt{2}$. Left: weighted norm. Right: maximum norm.}\label{example5.2.1}
\end{figure}

\begin{figure}[htbp]
\centering
\includegraphics[width=12cm,height=5cm]{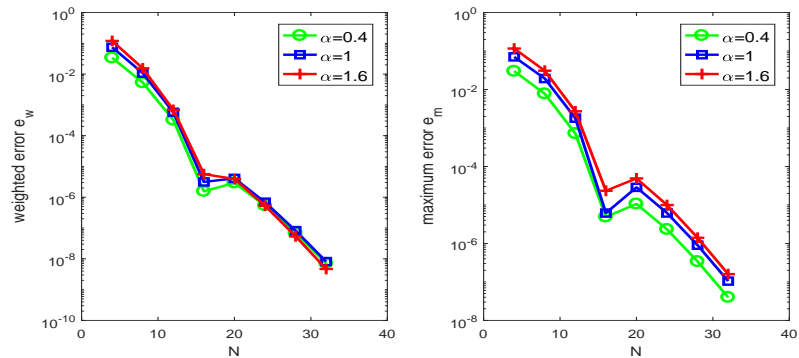}
\caption{Numerical error with the over scaled bases for (\ref{Example 5.2}) with exact solution $u(x)=\exp(-\frac{x^2}{2})x^2\cos(x).$ The scaling factor is $r=1$. Left: weighted norm. Right: maximum norm.}\label{example121}
\end{figure}

Now we test the normalized Hermite functions. For the same equation we choose the right hand side such that the solution yields $u(x)=\exp(-2x^2)x^2\cos(x).$ It is clear that the optimal scaling factor is $r=2$. The numerical results with a scaling factor $r=2$ and without a scaling factor (i.e., $r=1$) are reported Fig. \ref{hatHerror2} and Fig. \ref{hatHerror21}, respectively. Again, it is shown that a proper scaling factor can be useful to speed up the convergence.
\begin{figure}[htbp]
\centering
\includegraphics[width=12cm,height=5cm]{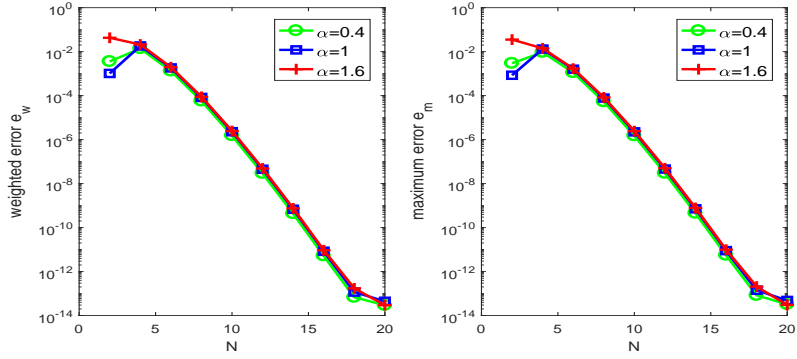}
\caption{Numerical error with the normalized Hermite functions for (\ref{Example 5.2}) with exact solution $u(x)=\exp(-2x^2)x^2\cos(x).$ The scaling factor is chosen as $r=2$. Left: weighted norm. Right: maximum norm.}\label{hatHerror2}
\end{figure}

\subsection{A two-dimensional example}
We now consider a two dimensional example, and the equation considered is
\begin{equation}\label{Example 5.3}
(-\Delta)^{\alpha/2}u(x,y)+2 u(x,y) = g(x,y).
\end{equation}
The exact solution is chosen as $u(x,y)=\exp(-(x^2+y^2))\sin(x+y).$ We also perform the computations with $\alpha=0.4, \,1$ and $1.6.$ To avoid too much pictures, here we only test the performance of the over-scaled bases. The numerical errors in weighted and maximum norm against the numerical of number of collocation points $N$ are presented in Fig. \ref{example5.3}. Spectral convergence is again observed.
\begin{figure}[htbp]
\centering
\includegraphics[width=12cm,height=5cm]{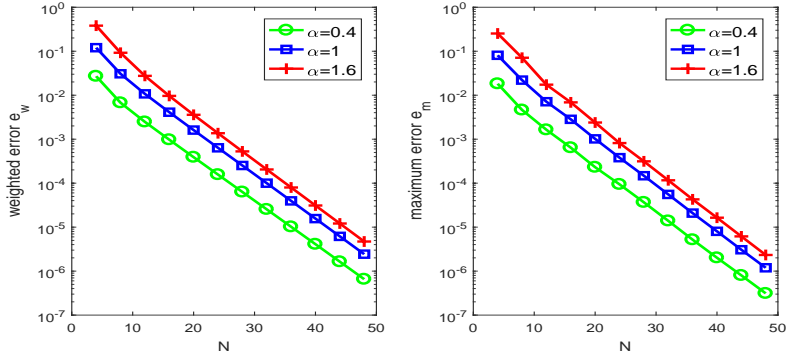}
\caption{Numerical error with normalized Hermite functions for (\ref{Example 5.2}) with exact solution $u(x)=\exp(-2x^2)x^2\cos(x).$ The scaling factor is chosen as $r=1$. Left: weighted norm. Right: maximum norm.}\label{hatHerror21}
\end{figure}

\begin{figure}[htbp]
\centering
\includegraphics[width=12cm,height=5cm]{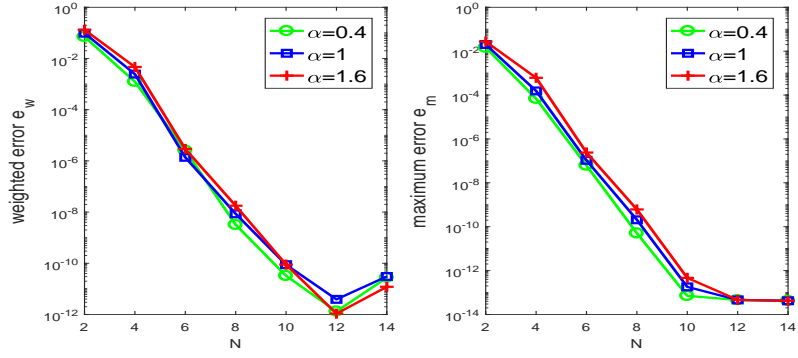}
\caption{A two dimensional example (\ref{Example 5.3}) with the exact solution $u(x,y)=\exp(-(x^2+y^2))\sin(x+y)$. Left: numerical error in weighted norm. Right: numerical error in maximum norm.}\label{example5.3}
\end{figure}


\subsection{A multi-term fractional model}
Our next example the multi-term Laplacian equation:
\begin{equation}\label{Example 5.4}
\sum_{j=1}^{J} (-\Delta)^{\alpha_{j}/2}u(x) = g(x), \quad x\in\mathbb{R}.
\end{equation}
Here we set $J=4$ and $\{\alpha_j\}_{j=1}^J$ are chosen as the transformed Legendre-Gauss points:
\begin{equation}
\alpha_1= 0.139,  \quad \alpha_2= 0.660, \quad \alpha_3= 1.340, \quad \alpha_4= 1.861.
\end{equation}
We set the exact solution to be $u(x)=\textmd{exp}(- 3x^2/2)\left(\sin{x}+x^6+x^2\cos{x}\right)$ and the right hand side can be computed accordingly. Again, we test the performance of the over-scaled bases. In this example, we consider scaling factors $r=\sqrt{1.5}$, $\sqrt{1.3}$, and the approach without a scaling factor, i.e., $r=1$. The corresponding numerical results are reported in Fig. \ref{example5.5}.
\begin{figure}[htbp]
\centering
\includegraphics[width=12cm,height=5cm]{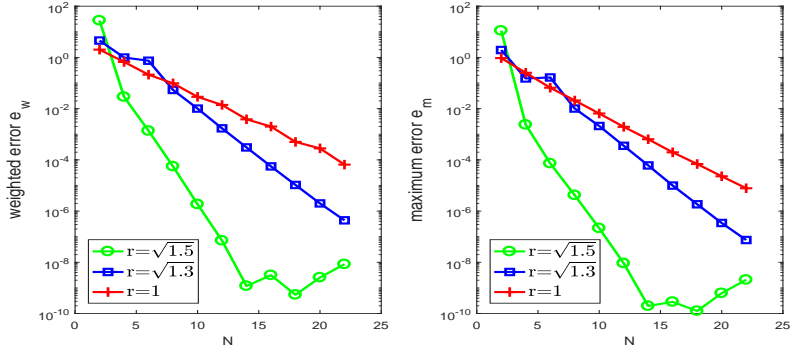}
\caption{The multi-term fractional Laplace equation with exact solution $u(x)=e^{-3x^2/2}(\sin{x}+x^6+x^2\cos{x}).$ Left: numerical error in weighted norm. Right: numerical error in maximum norm. }\label{example5.5}
\end{figure}
We can see that both the weighted error and maximum error decay fast for the case of $r=\sqrt{1.5}$. And this indicates the effectiveness of using a scaling in improving convergence rate.

%

\subsection{A nonlinear example}
Our next example is a nonlinear fractional PDE
\begin{equation}\label{Example 5.5}
(-\Delta)^{\alpha/2}u(x)+ u^2(x)=g(x)\\
\end{equation}
For the over scaled bases $\widetilde{H}_{n}(x)$, we set the exact solution as $u(x)=\exp(-x^2)(\sin(x)+x^2)$.
For the normalized Hermite functions $\widehat{H}_{n}(x)$, the exact solution is chose to be $u(x)=\exp(-x^2/2)(\sin(x)+x^2).$ In our computations, for each expansion number $N,$ we use the Newton iteration method with a tolerance $10^{-16}$ to deal with the nonlinear term. The performance of collocation method with the normalized Hermite functions are presented Fig. \ref{hatHerroraddcu2}, and it is shown that the method yields a spectral convergence rate.
%

\subsection{An eigenvalue problem}
\begin{figure}[htbp]
\centering
\includegraphics[width=12cm,height=5cm]{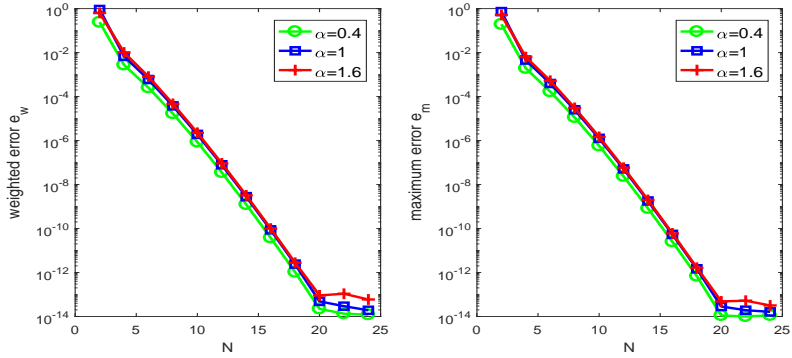}
\caption{Numerical error with $\hat{H}_n$ for the nonlinear problem (\ref{Example 5.5}).}
\label{hatHerroraddcu2}
\end{figure}
Finally we consider the following eigenvalues problem
\begin{equation}\label{Example 5.6}
\left((-\Delta)^{\alpha/2}+x^2\right)u(x)=\lambda u(x)
\end{equation}
The above eigenvalue problem with $\alpha=1$ has been analyzed in \cite{eigenvalue}. In particular, the eigenvalues of this problems is given by
\begin{align*}
\lambda_{2k-1}=-a_{k}', \quad \lambda_{2k}=-a_{k}, \quad k=1,2,...,
\end{align*}
where $a_{k}$ and $a_{k}'$ are the roots of the following Airy function and its derivative (in the decreasing order)
\begin{equation*}
A(x)=\frac{1}{\pi}\int_{0}^{\infty}\cos\left(\frac{t^3}{3}+xt\right)dt.
\end{equation*}

In this example, we shall compute the first three eigenvalues by the spectral collocation method. The exact eigenvalues are
\begin{align*}
\lambda_{1}\approx1.01879297164747, \quad \lambda_{2}\approx2.33810741045976, \quad \lambda_{3}\approx3.24819758217983.
\end{align*}
Numerical result are presented in Fig. \ref{eigenvalue} with $\log\log$ scale. An algebraic decay is observed and this is due to the algebraic decay (non-exponential decay) of eigenvalues.

\begin{figure}[htbp]
\centering
\includegraphics[width=8cm]{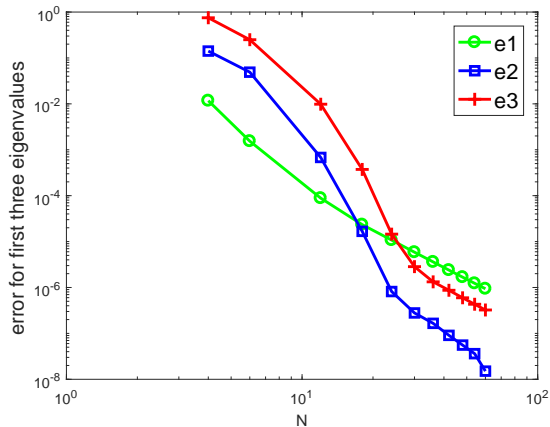}
\caption{Numerical errors for the first three eigenvalues.}
\label{eigenvalue}
\end{figure}

\section{Summary and Conclusion}
\label{sec7}
In this work, we have proposed a spectral collocation method based on Hermite functions for fractional PDEs in unbounded domain. One basis function is associated with the over-scaled weight $\exp(-x^2)H_n(x)$ and another is with the standard normalized-weight $\exp(-x^2/2)H_n(x)$, each has some advantages or popularity in practice. For both approaches, explicit differentiation matrices are derived. To deal with solutions with fast or  slow decay rate, a scaling factor in spectral implementation is discussed.

Although the numerical experiments indicate the spectral rate of convergence, there are still several issues requiring future investigations:

\begin{itemize}
\item Rigorous convergence analysis is not covered in this work, which is still an ongoing work.

\item For over-scaled Hermite functions (which was used extensively in physics, see \cite{Risken}) the condition numbers of the associated matrices grow fast with respect to $N$. Thus, it is useful to investigate some efficient pre-conditioners in this case.

\item We only provide some ad-hoc discussions on the scaling factors. It will be more meaningful to provide some more practical guidance on the optimal choice of the scaling factors, see, e.g., \cite{Sun}.
\end{itemize}

\section*{Acknowledgments}
This work is partially supported by the National Natural Science Foundations of China under grant numbers 91630312, 91630203, 11571351, and 11731006. The second author is supported by a Hong Kong PhD Fellowship. The last author is supported by the science challenge project (No. TZ2016001), NCMIS, and the youth innovation promotion association (CAS).

\end{document}